\newcommand{\RR}{\ensuremath{\mathbb{R}}}
\newcommand{\QQ}{\ensuremath{\mathbb{Q}}}
\newcommand{\CC}{\ensuremath{\mathbb{C}}}
\newcommand{\ZZ}{\ensuremath{\mathbb{Z}}}
\numberwithin{equation}{subsection}
\newtheorem{proposition}[equation]{Proposition}
\newtheorem{theorem}[equation]{Theorem}
\newtheorem{corollary}[equation]{Corollary}
\newtheorem{lemma}[equation]{Lemma}
\theoremstyle{definition}
\newtheorem{definition}[equation]{Definition}
\newtheorem{example}[equation]{Example}
\theoremstyle{remark}
\newtheorem{remark}[equation]{Remark}
\def\QuotS#1#2{\leavevmode\kern-.0em\raise.2ex\hbox{$#1$}\kern-.1em/\kern-.1em\lower.25ex\hbox{$#2$}}
\newcommand{\R}{\RR}
\newcommand{\Q}{\QQ}
\newcommand{\C}{\CC}
\newcommand{\Z}{\ZZ}
\definecolor{darkyellow}{HTML}{BDB76B}
\newcommand{\defi}[1]{\emph{\textsf{#1}}}
\newcommand{\calV}{{\mathcal V}}
\newcommand{\calS}{{\mathcal S}}
\newcommand{\calLmin}{{\mathcal L}_{\textup{min}}}
\newcommand{\tr}{\textup{\textsf{T}}{}\mkern-2mu}
\newenvironment{enumalph}
{\begin{enumerate}}
{\end{enumerate}}
\newenvironment{enumalg}
{\begin{enumerate}}
{\end{enumerate}}
\newenvironment{enumroman}
{\begin{enumerate}}
{\end{enumerate}}
\DeclareMathOperator{\maxdiag}{maxdiag}
\DeclareMathOperator{\Aut}{Aut}
\DeclareMathOperator{\satspan}{satspan}
\DeclareMathOperator{\Latt}{\satspan}
\DeclareMathOperator{\Span}{span}
\DeclareMathOperator{\M}{M}
\DeclareMathOperator{\Isom}{Isom}
\DeclareMathOperator{\GL}{GL}
\DeclareMathOperator{\Sp}{Sp}
\DeclareMathOperator{\disc}{disc}
\DeclareMathOperator{\Can}{Can}
\DeclareMathOperator{\proj}{proj}
\DeclareMathOperator{\Stab}{Stab}
\DeclareMathOperator{\Min}{Min}
\DeclareMathOperator{\CV}{CV}
\DeclareMathOperator{\cvd}{cvd}
\DeclareMathOperator{\Cls}{Cls}
\DeclareMathOperator{\Orth}{O}
\DeclareMathOperator{\Gen}{Gen}
\DeclareMathOperator{\Map}{Map}
\DeclareMathOperator{\SympBas}{SympBas}
\begin{document}

\author[Dutour Sikiri\'c]{Mathieu Dutour Sikiri\'c}
\address{Mathieu Dutour Sikiri\'c, Rudjer Boskovi\'c Institute, Bijenicka 54, 10000 Zagreb, Croatia}
\email{mathieu.dutour@gmail.com}

\author[Haensch]{Anna Haensch}
\address{Anna Haensch, Duquesne University, Department of Mathematics and Computer Science, Pittsburgh, Pennsylvania, USA}
\email{annahaensch@gmail.com}

\author[Voight]{John Voight}
\address{John Voight, Department of Mathematics, Dartmouth College, 6188 Kemeny Hall, Hanover, NH 03755, USA}
\email{jvoight@gmail.com}

\author[van Woerden]{Wessel P.J. van Woerden}
\address{Wessel van Woerden, Centrum Wiskunde \& Informatica, Science Park 123, 1098 XG Amsterdam, Netherlands}
\email{wvw@cwi.nl}

\title{A canonical form for positive definite matrices}

\begin{abstract}
We exhibit an explicit, deterministic algorithm for finding a canonical form for a
positive definite matrix under unimodular integral transformations.  We use characteristic sets of short vectors and partition-backtracking graph software.  The algorithm runs in a number of arithmetic operations that is exponential in the dimension $n$, but it is practical and more efficient than canonical forms based on Minkowski reduction. 
\end{abstract}

\maketitle

\section{Introduction}

\subsection{Motivation} \label{sec:motivation}

For $n$ a positive integer, let $\calS^n$ denote the $\R$-vector space of symmetric real $n\times n$-matrices and $\calS^n_{>0} \subset \calS^n$ denote the cone of positive definite symmetric $n\times n$-matrices.  For $A \in \calS^n_{>0}$, the map $x \mapsto x^{\tr} A x$ (where ${}^\tr$ denotes transpose) defines a positive definite quadratic form, with $A$ its Gram matrix in the standard basis; for brevity, we refer to $A \in \calS^n_{>0}$ as a \defi{form}.  The group $\GL_n(\ZZ)$ of unimodular matrices acts on $\calS^n_{>0}$ by the action $(U,A) \mapsto U^\tr AU$; the stabilizer of a form $A$ under this action is the finite group
\begin{equation}
\Stab(A) \colonequals \{ U\in \GL_n(\ZZ)  :  U^\tr A U = A\}.
\end{equation}
Two forms $A, B\in \calS^n_{>0}$ are said to be \defi{(arithmetically) equivalent} if there exists a unimodular matrix $U\in \GL_n(\ZZ)$ such that
\begin{equation}
A = U^\tr B U.
\end{equation}
In the Geometry of Numbers \cite{bookschurmann}, forms arise naturally as Gram matrices of Euclidean lattices under a choice of basis; in this context, two forms are arithmetically equivalent if and only if they correspond to isometric lattices.

Plesken--Souvignier \cite{PleskenSouvignier} exhibited algorithms to compute stabilizers and test for arithmetic equivalence among forms, and these have been used widely in practice \cite{Fluckiger_Cyclotomic,Kirschmer_OneClassGenera,GreenbergVoight,PerfectDim8,Schonenbeck_Hecke}.  In a more theoretical direction, Haviv--Regev \cite{HavivRegevLIP} proposed algorithms based on the Shortest Vector Problem and an isolation lemma for these purposes as well, with a time complexity of $n^{O(n)}$.

While these algorithms have been sufficient for many tasks, they suffer from an unfortunate deficiency. Suppose we have many forms $A_1,\dots,A_m \in \calS^n_{>0}$ and we wish to identify them up to equivalence. A naive application of an equivalence algorithm requires $O(m^2)$ equivalence tests (in the worst case).  The number of tests can be somewhat mitigated if useful invariants are available, which may or may not be the case.

Our approach in this article is to compute a \emph{canonical form} $\Can_{\GL_n(\ZZ)}(A)$ for $A \in \calS^n_{>0}$.  This canonical form should satisfy the following two basic requirements:
\begin{enumroman}
\item For every $A\in \calS^n_{>0}$, $\Can_{\GL_n(\ZZ)}(A)$ is equivalent to $A$; and
\item For every $A\in \calS^n_{>0}$ and $U \in \GL_{n}(\ZZ)$, $\Can_{\GL_n(\ZZ)}(U^\tr A U) = \Can_{\GL_n(\ZZ)}(A)$.
\end{enumroman}
(The equivalence in (i) is unique up to $\Stab(A)$.)  Combining a canonical form with a hash table, the identification of equivalence classes in a list of $m$ forms takes only $m$ canonical form computations (and $m$ hash table lookups) and so has the potential to be much faster.

\subsection{Minkowski reduction and characteristic sets}

The theory of Minkowski reduction provides one possible approach to obtain a canonical form.
The Minkowski reduction domain \cite{MinkowskiReduction} is a polyhedral domain $P_n \subset \calS^{n}_{>0}$ with the property that there exists an algorithm for \emph{Minkowski reduction}, taking as input a form $A$ and returning as output an equivalent form in $P_n$.  For example, for $n=2$ we recover the familiar Gaussian reduction of binary quadratic forms. An implementation of Minkowski reduction is available \cite{CaratSoftware}; however, this reduction is quite slow in practice, and it is unsuitable for forms of large dimension $n$ (say, $n \geq 12$).

For those forms whose Minkowski reduction lies in the \emph{interior} of the domain $P_n$, the Minkowski reduction is unique
\cite[p.\ 203]{DonaldsonMinkowski}, thereby providing a canonical form.
Otherwise, when the reduction lies on the boundary of $P_n$, there are finitely many possible Minkowski reduced forms; one can then order the facets of the polyhedral domain $P_n$ to choose a canonical form among them.
This approach was carried out explicitly by Seeber (in 1831) for $n=3$; and, citing an unpublished manuscript, 
Donaldson claimed ``Recently, Hans J.\ Zassenhaus has suggested that Minkowski reduction can be applied to the problem of row reduction
of matrices of integers''  \cite[p.\ 201]{DonaldsonMinkowski}.
An extension to $n=5,6,7$ is possible at least in principle, since $P_n$ is known in these cases \cite{bookschurmann}.  However, the problem of determining the
facets of the Minkowski reduction domain is hard in itself and so this strategy seems unrealistic in higher dimensions.
Other reduction theories \cite{zolotarev73,Grenier_Reduction_Theory} suffer from the same problem
of combinatorial explosion on the boundary.

In contrast, the approach taken by Plesken--Souvignier \cite{PleskenSouvignier} for computing the stabilizer and checking for equivalence of a form $A$ uses the following notion.

\begin{definition} \label{defn:charvectorset}
A \defi{characteristic vector set} function is a map that assigns to every $n \geq 1$ and form $A \in \calS_{>0}^n$ a finite subset of vectors $\calV(A) \subseteq \ZZ^n$ such that:
\begin{enumroman}
\item $\calV(A)$ generates $\ZZ^n$ (as a $\Z$-module); and
\item For all $U\in \GL_n(\ZZ)$, we have $U^{-1} \calV(A) = \calV(U^\tr A U)$.
\end{enumroman}
\end{definition}

The basic idea is then given a form $A$ to define an edge-weighted graph from a characteristic vector set $\calV(A)$;
using this graph, equivalence and automorphisms of forms becomes a problem about isomorphism and automorphisms of graphs (see Lemma \ref{lem:samestab}).
The graph isomorphism problem has recently been proved to be solvable in quasi-poly\-nomial time by Babai (see the exposition by Helfgott \cite{Helfgott_Babai}); however, the current approaches to computing characteristic vector sets (including ours) use algorithms to solve the Shortest Vector Problem which is known to be NP-hard \cite{Micciancio_SVP_NP_HARD}, so it is difficult to take advantage of this complexity result in the general case.  Nevertheless, we may hope to leverage some practical advantage from this approach.

\subsection{Our approach}

In this article, we adopt the approach of characteristic vector sets, using very efficient programs \cite{nauty,bliss} that compute a canonical form of a graph using partition backtrack.  A subfield $F$ of $\R$ is \defi{computable} if it comes equipped with a way of encoding elements in bits along with deterministic, polynomial-time algorithms to test equality, to perform field operations, and to compute (binary) expansions to arbitrary precision (for generalities, see e.g.\ Stoltenberg-Hansen--Tucker \cite{stoltenberg1999computable}).  For example, a number field with a designated real embedding is computable using standard algorithms.

\begin{theorem} \label{thm:mainthm}
There exists an explicit, deterministic algorithm that, on input a (positive definite) form $A \in \calS_{>0}^n$ with entries in a computable subfield $F \subset \R$, computes a canonical form for $A$. For fixed $n \geq 1$, this algorithm runs in a bounded number of arithmetic operations in $F$ and in a polynomial number of bit operations when $F=\Q$.
\end{theorem}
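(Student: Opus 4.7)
The plan is to combine three ingredients: first, an efficient construction of a characteristic vector set $\calV(A)$ from short vectors of $A$; second, the encoding of $(A,\calV(A))$ as an edge-weighted (and vertex-colored) graph $G(A)$ for which arithmetic equivalence of forms translates exactly to isomorphism of graphs; and third, a canonical form algorithm for graphs (for instance nauty or bliss, using partition backtrack) from which I canonically extract a $\ZZ$-basis and hence a canonical representative for $A$.

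More concretely, I would first produce $\calV(A)$ by enumerating vectors $v\in\ZZ^n$ with $v^\tr A v$ at most some equivariant threshold $\mu(A)$ (for example, the shortest vectors, enlarged if necessary so that the resulting set generates $\ZZ^n$) via a Fincke--Pohst style algorithm. This enumeration uses only arithmetic in $F$, satisfies axioms (i)--(ii) of Definition~\ref{defn:charvectorset} by construction, and for fixed $n$ both the cardinality of $\calV(A)$ and the number of required arithmetic operations are bounded in terms of $n$ alone. I would then form the edge-weighted graph $G(A)$ whose vertex set is $\calV(A)$ and whose labels record the Gram values $v^\tr A w$ together with a color encoding $v^\tr A v$; by the equivariance in Definition~\ref{defn:charvectorset}(ii), forms $A$ and $B$ are equivalent if and only if $G(A)\cong G(B)$ as labeled graphs (this will be the content of the forthcoming Lemma~\ref{lem:samestab}).

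Next, I feed $G(A)$ into a graph canonicalization routine, which returns a canonical ordering $\pi_A$ of the vertices depending only on the isomorphism class of $G(A)$, hence only on the equivalence class of $A$. Scanning $\calV(A)$ in the order $\pi_A$, I greedily select the first vectors $v_1,\dots,v_n$ whose \emph{saturated} $\ZZ$-span strictly grows, stopping as soon as this saturated span equals $\ZZ^n$. The resulting matrix $U \colonequals [v_1 \mid \cdots \mid v_n] \in \GL_n(\ZZ)$ is canonically determined by the equivalence class of $A$, and I set $\Can_{\GL_n(\ZZ)}(A) \colonequals U^\tr A U$. Properties (i) and (ii) of a canonical form then follow from $\GL_n(\ZZ)$-equivariance at every stage: a unimodular change of basis $A \mapsto V^\tr A V$ replaces $\calV(A)$ by $V^{-1}\calV(A)$, induces an isomorphism on graphs, and therefore leaves $\pi_A$, the extracted basis, and hence $\Can_{\GL_n(\ZZ)}(A)$ unchanged.

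The main obstacle, and the reason I use the \emph{saturated} span rather than naive linear independence in the final step, is ensuring that the extracted basis is a genuine $\ZZ$-basis of $\ZZ^n$ and not merely a $\QQ$-basis; without this care, $U$ might fail to lie in $\GL_n(\ZZ)$ and the map would fail axiom (ii). Termination is guaranteed by Definition~\ref{defn:charvectorset}(i), and the Hermite normal form provides an explicit, deterministic way to update and test the saturated span using only arithmetic in $F$. For the complexity claim, note that for fixed $n$ the short-vector enumeration, the graph canonicalization on $G(A)$, and the Hermite-based basis extraction each require a bounded-in-$n$ number of arithmetic operations in $F$; for $F=\QQ$, tracking coefficient sizes via standard effective bounds (sizes of short vectors of an integral form, bit-complexity of Hermite normal form and of partition backtrack on a graph of bounded size) yields the stated polynomial bit-complexity bound.
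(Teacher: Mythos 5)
Your overall architecture (characteristic vector set $\to$ weighted graph $\to$ canonical vertex ordering $\to$ unimodular basis extraction) is the same as the paper's, but two steps fail as written. First, the complexity claim breaks for your choice of vector set. Taking the shortest vectors enlarged by an equivariant length threshold until they generate $\ZZ^n$ is essentially $\calV_{\textup{ms}}$ of \eqref{eqn:Vms}, and its cardinality is \emph{not} bounded in terms of $n$ alone: for $A_\lambda=\left(\begin{smallmatrix}1&0\\0&\lambda\end{smallmatrix}\right)$ (Example \ref{exm:Alambda}) one gets $\{\pm e_2\}\cup\{\pm e_1,\dots,\pm\lfloor\sqrt{\lambda}\rfloor e_1\}$, whose size grows with $\lambda$ even though $n=2$ is fixed. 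Hence neither ``a bounded number of arithmetic operations in $F$ for fixed $n$'' nor ``polynomially many bit operations when $F=\QQ$'' holds for your enumeration (already listing the vectors costs about $\sqrt{\lambda}$ operations, exponential in the bit size of $\lambda$). This is exactly where the paper has to do real work: it constructs the closest-vector-based set $\calV_{\textup{cv}}$ with $\#\calV_{\textup{cv}}(A)=n^{O(n)}$ (Theorem \ref{prop:vcv}) or uses the Voronoi-relevant vectors with $\#\calV_{\textup{vor}}(A)\leq 2(2^n-1)$, computable in $2^{O(n)}$ arithmetic operations and $s^{O(1)}2^{O(n)}$ bit operations over $\QQ$ (Lemma \ref{lem:vorrel}, Corollary \ref{cor:finitetime}). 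Without such a set, your argument yields a canonical form (as in Proposition \ref{prop:matcanform}) but not the stated running time.

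Second, your basis-extraction step is incorrect. Requiring that the \emph{saturated} span strictly grow is equivalent to requiring $\QQ$-linear independence of the newly selected vector, so your greedy scan simply picks $n$ linearly independent vectors; their saturation is then automatically $\ZZ^n$ (a saturated full-rank sublattice of $\ZZ^n$ is $\ZZ^n$), but their $\ZZ$-span may be a proper finite-index sublattice, so $U=[v_1\mid\cdots\mid v_n]$ need not lie in $\GL_n(\ZZ)$ and $U^\tr A U$ need not even be equivalent to $A$. Concretely, in the $A_\lambda$ example nothing prevents the canonical graph ordering from presenting $2e_1$ before $\pm e_1$; your rule then selects $2e_1$ (the saturated span jumps from $0$ to $\ZZ e_1$) and $e_2$, giving $\det U=2$. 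The paper avoids this by applying the Hermite normal form to the \emph{entire} ordered matrix $Q_A$ of characteristic vectors: since its columns generate $\ZZ^n$ as a $\ZZ$-module (Definition \ref{defn:charvectorset}(i)) and $Q_A$ has full rank, the decomposition $Q_A=UH$ determines $U\in\GL_n(\ZZ)$ uniquely, well-defined up to left multiplication by $\Stab(A)$, and $\Can_{\GL_n(\ZZ)}(A)\colonequals U^\tr A U$ then satisfies both axioms (section \ref{sec:canfm}, Proposition \ref{prop:matcanform}). Your graph encoding, the use of partition backtrack for a canonical ordering, and the equivariance argument are all in line with the paper; it is the size of the vector set and the production of a genuinely unimodular $U$ that need repair.
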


This theorem is proven by combining Proposition \ref{prop:matcanform} for the first
statement and Corollary \ref{cor:finitetime} for the running time analysis.  The running time in Theorem \ref{thm:mainthm} is exponential in $n$, as we rely on short vector computations; we are not aware of general complexity results, such as NP-hardness, for this problem.
In light of the comments about Minkowski reduction in the previous section, the real
content of Theorem \ref{thm:mainthm} is in the word \emph{explicit}.  We also find
this algorithm performs fairly well in practice (see section \ref{prac_complexity})---an implementation is available online \cite{polyhedral_cpp}.

\subsection{Contents}

In section \ref{SEC_Finite_set_invariant_vectors} we present the construction of some
characteristic vector set functions.  In section \ref{SEC_Finding_Canonical_Form},
we present how to construct a canonical form from a given characteristic set function.
In section \ref{SEC_complexity_full} we consider the time complexity of our algorithm; we conclude in section \ref{SEC_Applications} with extensions and applications.

\subsection{Acknowledgments}
This work was advanced during the conference \emph{Computational Challenges in the Theory of Lattices} at the Institute for Computational and Experimental Research in Mathematics (ICERM) 
and further advances were made during a visit to the Simons Institute for the Theory of Computing 
The authors would like to thank ICERM and Simons for their hospitality and support. Voight was supported by a Simons Collaboration grant (550029) and Van Woerden was supported by the ERC Advanced Grant 740972 (ALGSTRONGCRYPTO). We also thank Achill Sch\"urmann and Rainer Schulze-Pillot for help on Minkowski reduction theory and the anonymous referees for their detailed feedback.

\section{Construction of characteristic vector sets}\label{SEC_Finite_set_invariant_vectors}

In this section we build two characteristic vector set functions that can be used for the computation of
the stabilizer, canonical form, and equivalence of forms.

\subsection{Vector sets}

The sets of vectors that we use throughout this work are based on short or shortest vectors.
Given a set of vectors $\calV\subseteq \ZZ^n$, let $\Span(\calV)$ be the (not necessarily full) lattice
spanned over $\Z$ by $\calV$.
For $A\in \calS^n$ and $x\in \RR^n$, we write
\begin{equation}
A[x] \colonequals x^\tr A x\in \RR.
\end{equation}
For a form $A\in \calS^n_{>0}$ we define the \defi{minimum}
\begin{equation}
\min(A) \colonequals \min_{x\in \ZZ^n \smallsetminus \{0\}} A[x],
\end{equation}
the set of \defi{shortest} (or \defi{minimal}) \defi{vectors} and its span
\begin{equation} \label{eqn:calLMinsp}
\begin{aligned}
\Min(A) &\colonequals \left\{v\in \ZZ^n : A[v] = \min(A)\right\} \\
\calLmin(A) &\colonequals \Span(\Min(A)).
\end{aligned}
\end{equation}
The set of shortest vectors satisfies the desirable transformation property
\begin{equation}
\Min( U^\tr  AU) = U^{-1}\Min(A)
\end{equation}
for all $U \in \GL_n(\ZZ)$.  If $\Min(A)$ is full-dimensional, then $A$ is called \defi{well-rounded}.

Two obstacles remain for using $\Min(A)$ as a characteristic vector set:
\begin{enumerate}
\item[{\bf PB1.}] If $n\geq 2$, then $\Span(\Min(A))$ may not have rank $n$.
\item[{\bf PB2.}]  If $n\geq 5$, then $\Span(\Min(A))$ may have rank $n$ but may not equal $\ZZ^n$.
\end{enumerate}

Thus we have to consider other vector sets.  For $\lambda > 0$, let
\begin{equation}
\Min_A(\lambda) \colonequals \left\{v\in \ZZ^n \smallsetminus \{0\} : A[v] \leq \lambda \right\}.
\end{equation}

The vector set used for computing the stabilizer and automorphisms in the {\tt AUTO}/{\tt ISOM} programs of Plesken--Souvignier \cite{PleskenSouvignier} is:
\begin{equation}
\calV_{\textup{PS}}(A) \colonequals \Min_A(\maxdiag(A)),
\end{equation}
where $\maxdiag(A) \colonequals \max \{A_{ii} : 1\leq i\leq n\}$ is the maximum of the diagonal elements of $A$.  The vector set $\calV_{\textup{PS}}(A)$ contains the standard basis as a subset and as a result is adequate for computing the stabilizer. Typically LLL-reduction \cite{lenstra1982factoring} is used, leading to a decrease in $\maxdiag(A)$, to prevent large sets.
However, when computing equivalence we have a potential problem since two forms $A$ and $B$ can be equivalent but satisfy $\maxdiag(A) \neq \maxdiag(B)$. This is a limitation of {\tt ISOM}, which for equivalence can be resolved by taking the bound $\max\{\maxdiag(A), \maxdiag(B)\}$ (something we cannot do for our canonical form).

To prevent this problem we can use a more reliable vector set that consists of those vectors whose length is at most the minimal spanning length:
\begin{equation} \label{eqn:Vms}
\begin{aligned}
\calV_{\textup{ms}}(A) &\colonequals \Min_A(\lambda_{\min})\text{, where} \\
\lambda_{\min} &\colonequals \min \{\lambda>0 : \Span(\Min_A(\lambda)) = \ZZ^n\}.
\end{aligned}
\end{equation}
This vector set $\calV_{\textup{ms}}(A)$ is a characteristic vector set.
However, $\calV_{\textup{ms}}(A)$ can still be very large, making it impractical to use.
\begin{example} \label{exm:Alambda}
For example, the matrix
$A_{\lambda} = \left(\begin{array}{cc}
1 & 0\\
0 & \lambda
\end{array}\right)$ for $\lambda \geq 1$ gives
\begin{equation*}
\calV_{\textup{ms}}(A_{\lambda}) = \left\{\pm e_2\right\} \cup \{\pm e_1, \pm 2e_1, \dots, \pm \lfloor \sqrt{\lambda} \rfloor e_1\}.
\end{equation*}
while $\left\{\pm e_1, \pm e_2\right\}$ would be adequate. This problem is related to {\bf PB1}.
\end{example}

\subsection{An inductive characteristic vector set, using closest vectors} \label{sec:cvicvec}

Building on the observations made in the previous section, we now present a construction  that deals with {\bf PB1} and allows us to build a suitable characteristic vector set.

For a set of vectors $\calV \subseteq \ZZ^n$, the saturated sublattice (of $\Z^n$) spanned by $\calV$ is
\begin{equation}
\satspan(\calV) \colonequals \QQ \calV\cap \ZZ^n.
\end{equation}
Beyond shortest vectors, we use the \defi{closest vector distance}: for $v\in \QQ^n$, we define
\begin{equation}
\cvd(A, v) \colonequals \min_{x\in \ZZ^n} A[x - v]
\end{equation}
as the minimum distance from $\ZZ^n$ to the vector $v$ and
\begin{equation}
\CV(A,v) \colonequals \left\{x\in \ZZ^n : A[x-v] = \cvd(A,v) \right\}
\end{equation}
the set of closest vectors achieving this minimum.

Characteristic and closest vector sets behave well under restriction to a sublattice.  The following lemma describes this explicitly, in terms of bases.

\begin{lemma} \label{lem:indepentbasis}
Let $\calV$ be a characteristic vector set function, $A \in \calS_{>0}^n$ a form, and $L \subset \R^n$ a lattice of rank $r$.  Let $B \in \M_{n,r}(\R)$ be such that the columns are a $\Z$-basis of $L$; let $c$ be in the real span of $L$ and let $c_B \colonequals B^{-1} c \in \R^r$ be the unique vector such that $Bc_B = c$.  Then the sets
\[ B \calV( B^\tr  A B ) \quad \text{and} \quad B\CV(B^\tr  A B, c_B) \]
are independent of $B$ (depending only on $L,c$). 
\end{lemma}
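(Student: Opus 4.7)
The plan is to show that any two $\Z$-bases of $L$ differ by a unimodular matrix $U \in \GL_r(\Z)$ and then exploit two equivariances: the $\GL_r(\Z)$-equivariance of $\calV$ built into Definition \ref{defn:charvectorset}(ii), and an analogous equivariance for $\CV$ under the action $A' \mapsto U^\tr A' U$, $v \mapsto U^{-1} v$.

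First I would set up the comparison carefully. Suppose $B, B' \in \M_{n,r}(\R)$ both have columns forming a $\Z$-basis of $L$. Since two $\Z$-bases of a lattice differ by a unimodular change of basis, there is a unique $U \in \GL_r(\Z)$ with $B' = BU$. Set $A' \colonequals B^\tr A B \in \calS_{>0}^r$, which is positive definite because $B$ has full column rank; then $(B')^\tr A B' = U^\tr A' U$. Next, $B$ restricts to a linear isomorphism $\R^r \to \R L$ (the real span of $L$), so for $c \in \R L$ the vector $c_B$ is uniquely determined, and $c_{B'} = (B')^{-1} c = U^{-1} B^{-1} c = U^{-1} c_B$.

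For the characteristic vector set, Definition \ref{defn:charvectorset}(ii) applied in rank $r$ gives $\calV(U^\tr A' U) = U^{-1} \calV(A')$, so
\[ B' \calV\bigl((B')^\tr A B'\bigr) = BU \cdot U^{-1} \calV(A') = B \calV(A'), \]
which is manifestly independent of the choice of $B$. For the closest vector set I would carry out a one-line computation: for every $x \in \Z^r$,
\[ (U^\tr A' U)[x - U^{-1} c_B] = A'[Ux - c_B], \]
and since $x \mapsto Ux$ is a bijection of $\Z^r$, this gives both $\cvd(U^\tr A' U, U^{-1} c_B) = \cvd(A', c_B)$ and $\CV(U^\tr A' U, U^{-1} c_B) = U^{-1} \CV(A', c_B)$. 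Multiplying by $B' = BU$ as before,
\[ B' \CV\bigl((B')^\tr A B', c_{B'}\bigr) = BU \cdot U^{-1} \CV(A', c_B) = B \CV(A', c_B). \]

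The argument is essentially bookkeeping: once the notation is unpacked, everything reduces to the defining equivariance of $\calV$ and the transparent substitution $y = Ux$ for $\CV$. The only mild conceptual point to verify is that $c_B$ is well-defined (using that $B$ has full column rank and $c$ lies in the real span of $L$), and that positive-definiteness of $A' = B^\tr A B$ is preserved under the restriction to the rank-$r$ sublattice. I do not anticipate any genuine obstacle.
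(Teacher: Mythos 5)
Your proof is correct and takes essentially the same route as the paper: the paper's one-line justification simply observes that $B\calV(B^\tr A B)$ and $B\CV(B^\tr A B, c_B)$ are the intrinsically defined characteristic-vector and closest-vector sets of the form restricted to $L$ (viewed inside $\R^n$), and your explicit unimodular change-of-basis computation --- using Definition \ref{defn:charvectorset}(ii) for $\calV$ and the substitution $y = Ux$ for $\CV$ --- is precisely the verification that underlies that observation. There is no gap; your write-up just makes explicit what the paper asserts in compressed form.
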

\begin{proof}
The form $A|_B \colonequals B^\tr AB \in \calS_{>0}^r$ is the restriction of $A$ to $L$ in the basis $B$, so $B \calV(A|_B)$ is the characteristic vector set of this restricted form, as elements of $L \subset \R^n$.  Similarly, $B\CV(A|_B, c_B)$ is the set of vectors in $L \subset \R^n$, which are closest to $c$. Both sets only depend on $L$ and are independent of the chosen basis.
\end{proof}


Suppose that $A$ is well-rounded.  Let $v_1, \dots, v_n$ be a $\ZZ$-basis of the full rank lattice $\calLmin(A)$ spanned by $\Min(A)$ and let $B \in \M_{n \times n}(\Z)$ be the matrix with columns $v_1,\dots,v_n$.  We then define
\begin{equation}  \label{eqn:VcvA}
\calV_{\textup{wr-cv}}(A) \colonequals \Min(A) \cup \bigcup_{c\in \ZZ^n / \calLmin(A)} \left(c - B\CV(B^\tr  A B, B^{-1}c)\right).
\end{equation}
(It is possible to reduce the size of this set, e.g., by removing $0$ or filtering by length.)
The set $\calV_{\textup{wr-cv}}(A)$ consists of the union of the shortest vectors together with the set of points in each coset closest to the origin. By Lemma \ref{lem:indepentbasis}, the set $\calV_{\textup{wr-cv}}(A)$ is well-defined, independent of the choice of basis. Furthermore it satisfies the necessary transformation property and spans $\Z^n$ (as a $\Z$-module) because it contains at least one point from each coset in $\Z^n / \calLmin(A)$.

For a general form $A$, in geometrical terms we follow the filtration defined from the minimum \cite{CasselmanStability}. We define a set of vectors $\calV_{\textup{cv}}(A)$ inductively (described in an algorithmic fashion), as follows:
\begin{enumalg}
\item Compute the set $\Min(A)$ of vectors of minimal length and compute the saturated sublattice $L_1 \colonequals \Latt(\Min(A))$ spanned by these vectors.
\item Compute a $\ZZ$-basis $v_1, \dots, v_r$ of $L_1$, where $r$ is its rank.
  Let $B_1 \in \M_{n,r}(\R)$ be the matrix with columns $v_1,\dots,v_r$, and let $A_1 \colonequals B_1^{\tr} A B_1 \in \calS_{>0}^r$. Note that $A_1$ is well-rounded by construction.
\item 
Let $\proj \colon \ZZ^n \to \RR^n$ be the orthogonal projection on $L_1^{\perp}$ with respect to the scalar product defined by $A$.
\item Compute a basis $w_1, \dots, w_{n-r}$ of $L_2 := \proj(\Z^n)$ and let $B_2 \in \M_{n,(n-r)}(\R)$ the matrix with columns $w_1,\dots,w_{n-r}$. Let $A_2 \colonequals B_2^\tr  A B_2$.
\item If $r=n$, let ${\mathcal V}_{\textup{cv}}(A_2) \colonequals \emptyset$; otherwise, compute $\calV_{\textup{cv}}(A_2)$ recursively and let
\begin{equation} \label{Inductive_Definition_Vcv}
\calV_{\textup{cv}}(A) \colonequals B_1\calV_{\textup{wr-cv}}(A_1) \cup \bigcup_{v\in B_2\calV_{\textup{cv}}(A_2)} \CV(A, v).
\end{equation}
\end{enumalg}
\begin{theorem}\label{prop:vcv}
The following statements hold.
\begin{enumalph}
\item The set $\calV_{\textup{cv}}(A)$ is well-defined (independent of the choices of bases).
\item The association $A \mapsto \calV_{\textup{cv}}(A)$ is a characteristic vector set function.
\item We have $\#\calV_{\textup{cv}}(A) = n^{O(n)}$.
\item There is an explicit, deterministic algorithm that on input $A$ computes the set $\calV_{\textup{cv}}(A)$ in $n^{O(n)}$ arithmetic operations over $F$. For $F=\Q$ it has bit complexity $n^{O(n)} s^{O(1)}$ with $s$ the input size of $A$.
\end{enumalph}
\end{theorem}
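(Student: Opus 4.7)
The plan is to prove parts (a)--(d) by strong induction on $n$, following the recursive structure of the algorithmic definition. The base case is $n = 1$ (or equivalently $r = n$), where $A$ is automatically well-rounded, $\calV_{\textup{cv}}(A) = B_1 \calV_{\textup{wr-cv}}(A_1)$, and all claims reduce to analyzing $\calV_{\textup{wr-cv}}$ directly; the inductive step handles $r < n$ via the recursive call on $A_2 \in \calS_{>0}^{n-r}$.

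For (a), the key observation is that both $L_1 = \Latt(\Min(A))$ and $L_2 = \proj(\Z^n)$ depend only on $A$, not on any basis choice. Granting this, Lemma \ref{lem:indepentbasis} applied with $L = L_1$ and with $L = L_2$ shows that $B_1 \calV_{\textup{wr-cv}}(A_1)$, the projected set $B_2 \calV_{\textup{cv}}(A_2)$, and the closest-vector sets $\CV(A,v)$ are all intrinsic to $A$; induction takes care of $\calV_{\textup{cv}}(A_2)$ itself. For (b), the transformation property is verified by tracking how each ingredient behaves under $A \mapsto U^\tr A U$: $\Min$ transforms contravariantly, the $A$-orthogonal projection onto $L_1^\perp$ commutes with $U^{-1}$, and $\CV(U^\tr A U, U^{-1} v) = U^{-1}\CV(A,v)$. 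To show $\calV_{\textup{cv}}(A)$ generates $\Z^n$, note that since $L_1$ is saturated in $\Z^n$, projection induces an isomorphism $\Z^n/L_1 \xrightarrow{\sim} L_2$; by induction $\calV_{\textup{cv}}(A_2)$ generates $\Z^{n-r}$, so $B_2 \calV_{\textup{cv}}(A_2)$ generates $L_2$, and hence each coset of $L_1$ in $\Z^n$ is represented by some element of $\CV(A,v)$ (whose projection is forced to equal $v \in L_2$ by $A$-orthogonal decomposition of the squared distance). Combined with $B_1 \calV_{\textup{wr-cv}}(A_1)$, which generates $L_1$ by the analogous well-rounded argument, the union generates $\Z^n$.

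For (c), we combine three standard bounds. The kissing number gives $\#\Min(A) \leq 2^{O(n)}$. Hermite's bound $\min(A)^n \leq \gamma_n^n \det(A)$ together with the identity $[\Z^n:\calLmin(A)]^2 \det(A) = \det(A|_{\calLmin(A)})$ and a Hadamard-type estimate on a short basis of $\calLmin(A)$ yields $[\Z^n:\calLmin(A)] \leq n^{O(n)}$ in the well-rounded case; and a classical enumeration bound (e.g.\ via covering by balls of radius $\tfrac{1}{2}\sqrt{\cvd(A,v)}$) gives $\#\CV(A,v) \leq n^{O(n)}$ for every $v$. Feeding these into the recursion $\#\calV_{\textup{cv}}(A) \leq \#\calV_{\textup{wr-cv}}(A_1) + \#\calV_{\textup{cv}}(A_2) \cdot \max_v \#\CV(A,v)$ and unrolling over the at most $n$ recursion levels yields $\#\calV_{\textup{cv}}(A) = n^{O(n)}$. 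For (d), each step is realized by a standard deterministic algorithm: $\Min(A)$ and $\CV(A,v)$ are computed by Kannan's or Fincke--Pohst enumeration in $n^{O(n)}$ arithmetic operations in $F$; constructing $L_1,B_1,A_1,\proj,L_2,B_2,A_2$, enumerating the at most $n^{O(n)}$ cosets of $\calLmin(A_1)$ in $\Z^r$, and the bookkeeping for (\ref{Inductive_Definition_Vcv}) are polynomial per level; the recursion has depth at most $n$. Over $F=\Q$ one preprocesses with LLL so that matrix entries and coset representatives have bit-size polynomial in $s$ throughout, giving bit complexity $n^{O(n)} s^{O(1)}$.

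The main obstacle is (c): bounding $[\Z^n:\calLmin(A)]$ for well-rounded forms requires selecting a sufficiently short basis of $\calLmin(A)$ (which in general cannot be chosen as a subset of $\Min(A)$), and one must verify that multiplying the per-level estimates does not degrade the $n^{O(n)}$ constant beyond control across up to $n$ levels of recursion. The other parts are essentially bookkeeping on top of Lemma \ref{lem:indepentbasis} and the saturation of $L_1$.
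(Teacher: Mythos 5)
Your overall architecture (induction on the dimension, Lemma \ref{lem:indepentbasis} for (a), the transformation properties of $\Min$, $\proj$ and $\CV$ for the equivariance half of (b), SVP/CVP enumeration plus HNF for (d)) is essentially the paper's argument, but two steps you rely on are genuine gaps. First, in your argument that $\calV_{\textup{cv}}(A)$ generates $\Z^n$ you assert that every $x\in\CV(A,v)$ with $v\in B_2\calV_{\textup{cv}}(A_2)$ has $\proj(x)=v$, ``forced by the $A$-orthogonal decomposition.'' It is not forced: the decomposition only gives $A[x-v]=A[\proj(x)-v]+A[x-\proj(x)]$, and the closest points of $\Z^n$ to $v$ are guaranteed to lie in the fiber $\proj^{-1}(v)$ only when the covering distance of $L_1$ is smaller than the minimum of $L_2$, which can fail. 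Concretely, take $A$ to be the Gram matrix of $e_1,\dots,e_4,v$ in Euclidean $\R^5$ with $v=(\tfrac12,\tfrac12,\tfrac12,\tfrac12,h)$ and $\tfrac12<h<1$: then $\Min(A)=\{\pm e_1,\dots,\pm e_4\}$, $L_1$ is the span of $e_1,\dots,e_4$, $L_2$ is generated by $w=(0,0,0,0,h)$, and the closest lattice points to $w$ are $0$ and $(0,0,0,0,2h)$, both at distance $h^2<1$ and neither in the fiber of $w$; the resulting set $\calV_{\textup{cv}}(A)$ then spans only an index-$2$ sublattice of $\Z^5$. So the generation property at this step needs a real argument (or a fiber-wise reading of the $\CV$ step, i.e.\ taking closest points within $\proj^{-1}(v)\cap\Z^n$); note that the paper's proof of (b) only verifies the transformation property, so you cannot lean on it here.

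Second, your count in (c) does not close. With your per-level bound $\#\CV(A,v)\le n^{O(n)}$, the recursion $\#\calV_{\textup{cv}}(A)\le\#\calV_{\textup{wr-cv}}(A_1)+\#\calV_{\textup{cv}}(A_2)\cdot\max_v\#\CV(A,v)$ unrolled over up to $n$ levels gives only $n^{O(n^2)}$, because the multiplicative factors compound---exactly the point you flag but leave unresolved. The paper's route uses sharper ingredients: the index bound $[\Z^r:\calLmin(A_1)]\le\lfloor\gamma_r^{r/2}\rfloor$ of Keller--Martinet--Sch\"urmann \cite{MinkowskianSublattices} in place of your Hermite-plus-short-basis estimate (your estimate is acceptable for an $n^{O(n)}$ bound, granted the standard lemma extracting a basis of norm $O(\sqrt{n})\cdot\min(A)$ from generators of minimal norm, which you would need to state), and $\#\CV(A,v)\le 2^n$ from \cite[Prop.\ 13.2.8]{DL} rather than $n^{O(n)}$; but even with these, reaching $n^{O(n)}$ requires controlling the product of the per-level $\CV$-factors, e.g.\ by confining each $\CV$ set to a single coset of the corresponding $L_1$ so that the factor at a level of rank $r_i$ is $2^{O(r_i)}$ and the factors multiply to $2^{O(n)}$ since $\sum_i r_i\le n$---which circles back to the fiber issue above. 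Parts (a) and (d) are fine and essentially as in the paper (which cites Micciancio--Voulgaris \cite{micciancio2013deterministic} for single-exponential SVP/CVP and Kannan--Bachem \cite{kannan1979polynomial} for the HNF and bit-size control, where you invoke Kannan/Fincke--Pohst enumeration and LLL preprocessing).
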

\begin{proof}
We prove (a) by induction in the dimension $n$ that $\calV_{\textup{cv}}$ is a characteristic vector set. The base case $n=0$ is trivial.  For $n > 0$, note that $A_1$ is well rounded and $A_2$ has dimension at most $n-1$ and thus $B_1 \calV_{\textup{wr-cv}}(A_1)$ and $B_2 \calV_{\textup{cv}}(A_2)$ are independent of the choice of basis by induction and Lemma \ref{lem:indepentbasis}. The lattice $L_2$ is uniquely defined by the projection. 

For (b), by part (a), we may choose convenient bases. Running the algorithm for $A$ and $A' = U^\tr AU$ we can assume that $v_i' = U^{-1}v_i$ and $w_i' = U^{-1}w_i$ by using the transformation property of $\Min(A)$. Then $A_i' = A_i$ and $B_i' = U^{-1}B_i$ for $i=1,2$. We conclude by noting that $\CV$ also has the compatible transformation property:
\begin{equation}
	\CV(U^\tr AU, U^{-1}v) = U^{-1} \CV(A, v).
\end{equation}

For (c), By Keller--Martinet--Sch\"urmann \cite[Proposition 2.1]{MinkowskianSublattices} for a well-rounded lattice
the index of the sublattice determined by the shortest vectors is
at most $\lfloor \gamma_n^{n/2} \rfloor$ with $\gamma_n$ the Hermite constant satisfying $\gamma_n^{n/2} \leq (2/\pi)^{n/2} \cdot \Gamma(2+n/2) = n^{O(n)}$.
The bound on $\calV_{\textup{cv}}$ follows by combining this with exponential upper bounds
on the kissing number \cite{KabatjanskiLevenshtein} and the upper bound $2^n$ on $\#\CV(A,v)$ \cite[Proposition 13.2.8]{DL}.

The running time estimate (d) for arithmetic operations follows by combining single exponential upper estimates for algorithms to solve the CVP and SVP (see e.g.\ Micciancio--Voulgaris \cite{micciancio2013deterministic}).  We conclude with the bit complexity analysis for $F=\Q$. 
The bit complexity of SVP and CVP algorithms is indeed polynomial time in the input size \cite{helfrich1985algorithms,pujol2008rigorous}.  (We lack a reference for more general fields, and although we do not see major obstacles doing such an analysis, it would be out of the scope of this work).  For the computed projection, the Gram--Schmidt orthogonalization process also has a polynomial bit complexity in the input size (in bounded dimension, by induction).  The remaining steps in computing $\calV_{\textup{cv}}(A)$, including computing a basis out of a spanning set, computing a basis for the saturated sublattice, and computing representatives of the cosets $Z^n/\calLmin(A)$, are standard applications of the computation of a Hermite Normal Form (HNF)---see also section \ref{sec:canfm}.  A careful HNF computation can be achieved in polynomial time in the input size \cite{kannan1979polynomial}.  In particular, the obtained basis vectors and coset representatives also have a bit size that is polynomially bounded in the input size.  Thus for $F=\Q$ all arithmetic operations while computing $\calV_{\textup{cv}}(A)$ have a bit complexity polynomial in $s$.  We note for completeness that efficient versions of SVP, CVP, and HNF algorithms depend heavily on the famous LLL-algorithm.  
\end{proof}

Although the cost of computing many closest vector problems may make it quite expensive to compute $\calV_{\textup{cv}}(A)$ in the worst case, we find in many cases that it gives a substantial improvement in comparison to other characteristic vector sets.

\begin{example}
Returning to Example \ref{exm:Alambda}, we find that $\calV_{\textup{cv}}(A_{\lambda}) = \{ \pm e_1, \pm e_2\}$.
\end{example}



The construction of $\calV_{\textup{cv}}$ addresses {\bf PB1},
but {\bf PB2} remains---even for well-rounded lattices $\#(\Z^n / \calLmin(A))$ can possibly be very large.

\begin{example} \label{exm:Niemeier}
  The self-dual Niemeier lattice $N_{23}$ \cite[Chapter 18]{SPLAG}, whose root diagram is $24\mathsf{A}_1$  is well-rounded: it has minimum $2$ with $48$ shortest vectors, and $\#\calV_{\textup{ms}}(N_{23}) = 194352$.
  Since the index of the lattice spanned by the shortest vectors in $N_{23}$ is $2^{24}$, the size of $\calV_{\textup{cv}}(N_{23})$ is at least $48 + 2^{24}$.
\end{example}


\begin{remark}
It may be possible to deal with some cases (but still not Example \ref{exm:Niemeier}) by working with characteristic vector sets on forms attached in a canonical way to $A$: for example, one could work with the \emph{dual} form attached to $A$, for sometimes the dual has few minimal vectors (even if $A$ has many).
\end{remark}

\subsection{A characteristic vector set, using Voronoi-relevant vectors.} \label{sec:voricvec}
A well-known geometric shape associated to lattices is the \defi{Voronoi cell}. The Voronoi cell is the set of all points closer to $0$ with respect to $A$ than to any other integer point. For a form $A$, the (open) Voronoi cell is the intersection of half-spaces
\begin{equation}
	\text{Vor}(A) \colonequals \bigcap_{x \in \Z^n \setminus \{ 0 \}} H_{A,x},
\end{equation}
with $H_{A,x} \colonequals \{ y \in \R^n : A[y] < A[y-x] \}$. However, almost all vectors in this intersection are superfluous, and we only consider the set of \defi{Voronoi-relevant vectors} $\calV_{\textup{vor}}(A)$, i.e. the (unique) minimal set of vectors such that
\begin{equation}
	\text{Vor}(A) = \bigcap_{x \in \calV_{\textup{vor}}(A)} H_{A,x}.
\end{equation}
\begin{lemma} \label{lem:vorrel}
The following statements hold
\begin{enumalph}
	\item The association $A \mapsto \calV_{\textup{vor}}(A)$ is a characteristic vector set function.
	\item We have $\# \calV_{\textup{vor}}(A) \leq 2 \cdot (2^n-1)$.
	\item There is an explicit, deterministic algorithm that on input $A$ computes the set $\calV_{\textup{vor}}(A)$ in $2^{2n+o(n)}$ arithmetic operations over $F$. For $F = \Q$ it has bit complexity $2^{2n+o(n)}s^{O(1)}$ with $s$ the input size of $A$.
\end{enumalph}
\end{lemma}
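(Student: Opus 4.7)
The plan is to address each part in turn, using the classical Voronoi--Minkowski characterization of Voronoi-relevant vectors as the key input.

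For (a), I would first check the transformation property by direct substitution: setting $B = U^\tr A U$, one has $B[y] = A[Uy]$, so $H_{B,x} = U^{-1} H_{A,Ux}$ and hence $\text{Vor}(B) = U^{-1}\text{Vor}(A)$. Because the minimal collection of half-spaces cutting out the Voronoi cell is uniquely determined (a half-space $H_{A,x}$ is relevant iff its bounding hyperplane meets $\text{Vor}(A)$ in a facet, and facets transform to facets under $U^{-1}$), we conclude $\calV_{\textup{vor}}(U^\tr A U) = U^{-1}\calV_{\textup{vor}}(A)$. For the spanning property, I would invoke the standard tiling picture: the translates $\ell + \overline{\text{Vor}}(A)$ for $\ell \in \Z^n$ tile $\R^n$, and two translates share a facet precisely when $\ell - \ell' \in \calV_{\textup{vor}}(A)$. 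Since $\R^n$ is connected and $\text{Vor}(A)$ is bounded (as $A$ is positive definite), the adjacency graph of this tiling is connected, so any $\ell \in \Z^n$ is reachable from $0$ by a chain of facet-adjacent cells, expressing $\ell$ as a $\Z$-combination of Voronoi-relevant vectors.

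For (b), I would appeal to the Voronoi--Minkowski classification: a nonzero $x \in \Z^n$ is Voronoi-relevant for $A$ if and only if $\pm x$ are the unique minimizers of $A[v]$ over $v \in x + 2\Z^n$. Each nonzero coset of $\Z^n/2\Z^n$ therefore contributes at most the pair $\{\pm x\}$ to $\calV_{\textup{vor}}(A)$, and since there are $2^n - 1$ nonzero cosets we obtain $\#\calV_{\textup{vor}}(A) \leq 2(2^n - 1)$.

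For (c), the classification yields a direct algorithm: enumerate the $2^n - 1$ nonzero cosets $c \in \Z^n/2\Z^n$; for each, solve a closest vector problem on the affine lattice $c + 2\Z^n$ to find the full set of minimizers of $A[v]$, and include $\pm x$ in $\calV_{\textup{vor}}(A)$ whenever there are exactly two minimizers. Using the deterministic Micciancio--Voulgaris CVP algorithm \cite{micciancio2013deterministic} each coset costs $2^{n+o(n)}$ arithmetic operations, yielding the claimed $2^{2n+o(n)}$ total; the $F = \Q$ bit complexity follows from their polynomial bit-complexity analysis combined with the HNF-based preprocessing already used in the proof of Theorem~\ref{prop:vcv}(d). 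The main subtlety I anticipate is that the CVP oracle must return \emph{all} minimizers in the coset, not just one, so that we can decide whether the coset contributes a relevant pair; I would handle this by using CVP to obtain the minimum distance $d$ and then enumerating the lattice points in the coset achieving distance $d$, which keeps the per-coset cost within the $2^{n+o(n)}$ budget.
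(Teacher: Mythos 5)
Your treatment of (a) and (b) is correct, and in fact more detailed than the paper's own proof: the paper establishes the spanning property by a descent argument (any nonzero $x \in \Z^n$ lies outside $\mathrm{Vor}(A)$, so one can subtract a relevant vector to get strictly closer to $0$, and a packing argument terminates the walk at $0$), handles the transformation property by appealing to the basis-free geometric definition, and then simply cites Micciancio--Voulgaris for (b) and (c). Your tiling-connectivity argument for spanning and your use of Voronoi's mod-$2$ characterization (relevant vectors are exactly the $\pm$ pairs of strict minimizers in the nonzero cosets of $\Z^n/2\Z^n$) to get the bound $2(2^n-1)$ are classical and equally valid routes.

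The genuine gap is in the complexity accounting for (c). You attribute a per-coset cost of $2^{n+o(n)}$ arithmetic operations to the ``deterministic Micciancio--Voulgaris CVP algorithm'', but the deterministic CVP algorithm of \cite{micciancio2013deterministic} runs in $2^{2n+o(n)}$ time (that is its headline result), not $2^{n+o(n)}$; the known $2^{n+o(n)}$-time CVP algorithms are randomized and are not what the lemma's deterministic claim can rest on. With the correct per-call cost, your scheme of $2^n-1$ independent CVP computations only yields $2^{3n+o(n)}$. One might hope to amortize by preprocessing the lattice $2\Z^n$ (with respect to $A$) once and then answering each coset query quickly, but the Micciancio--Voulgaris preprocessing consists precisely of computing the Voronoi-relevant vectors---the very set you are trying to compute---so as stated this is circular; breaking the circularity via their rank-by-rank bootstrap is exactly the content of their Voronoi-cell computation, which runs in $2^{2n+o(n)}$ in total. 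The clean repair (and what the paper does) is to invoke that result directly: \cite{micciancio2013deterministic} compute the full set of Voronoi-relevant vectors deterministically in $2^{2n+o(n)}$ arithmetic operations, with polynomially bounded bit complexity over $\Q$, which gives (c) (and, as a by-product, (b)) without the per-coset analysis; your coset enumeration then serves only as the proof of the cardinality bound, where it is perfectly adequate.
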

\begin{proof}
Property \textup{(ii)} of a characteristic vector set for $\calV_{\textup{vor}}$ follows from the geometric definition, fully independent of the basis. For property \textup{(i)}, note that for any nonzero $x \in \Z^n$, we have $x \not\in \text{Vor}(A)$, and thus there is a vector $v \in \calV_{\textup{vor}}(A)$ such that $x-v$ lies strictly closer to $0$ with respect to $A$. Repeating this (a finite amount of time by a packing argument) we eventually end up at $0$ and thus $x$ is the sum of Voronoi-relevant vectors. The remaining statements follow from Micciancio--Voulgaris \cite{micciancio2013deterministic}.
\end{proof}
Although this characteristic vector set has great theoretical bounds, we refrain from using it in practice: most lattices actually attain the $2 \cdot (2^n-1)$ Voronoi bound,
whereas constructions based on short and close vectors often beat the theoretical worst-case bounds and give much smaller vector sets in practice.

\section{Construction of a canonical form}\label{SEC_Finding_Canonical_Form}

Suppose now that we have chosen a characteristic vector set function $\calV$, as in section \ref{sec:cvicvec} or \ref{sec:voricvec}.  From this, we will construct a canonical form, depending on $\calV$.

\subsection{Graph construction}

Given a form $A$, let $\calV(A)= \{v_1, \dots, v_p\}$.  We define $G_A$ to be the edge- and vertex-weighted complete (undirected) graph on $p$ vertices $1,\dots,p$ such that vertex $i$ has weight $w_{i,i}=A[v_i]$ and the edge between $i$ and $j$ has weight $w_{i,j}=v_i^\tr A v_j = w_{j,i}$.  In other words, $G_A$ is the weighted complete graph whose adjacency matrix is $B^{\tr} A B$, where $B \in \M_{n,p}(\R)$ is the matrix whose columns are $v_i$.  (The graph $G_A$ depends on $\calV$, but we do not include it in the notation as we consider $\calV$ fixed in this section.)

\begin{lemma} \label{lem:samestab}
For a form $A \in \calS_{>0}^n$ and the graph $G_{A}$ constructed from a characteristic vector set $\calV(A)$ we have a group isomorphism
\begin{equation}
	\Stab(A) \simeq \Stab(G_{A}) := \{ \sigma \in S_p : w_{i,j} = w_{\sigma(i), \sigma(j)} \text{ for all $1 \leq i,j \leq p$}\}.
\end{equation}
\end{lemma}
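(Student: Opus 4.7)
The plan is to construct an explicit group isomorphism $\phi : \Stab(A) \to \Stab(G_A)$ whose essential content is: every stabilizer in $\Stab(A)$ permutes $\calV(A)$ weight-preservingly, and conversely every weight-preserving permutation of $\calV(A)$ lifts uniquely to a unimodular matrix fixing $A$.

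First I would define $\phi$. For $U \in \Stab(A)$, property (ii) of a characteristic vector set gives $U^{-1}\calV(A) = \calV(U^{\tr}AU) = \calV(A)$, so $U$ permutes $\calV(A) = \{v_1,\dots,v_p\}$; let $\sigma_U \in S_p$ be the permutation with $Uv_i = v_{\sigma_U(i)}$. To check $\sigma_U \in \Stab(G_A)$, simply compute
\begin{equation*}
w_{\sigma_U(i),\sigma_U(j)} = v_{\sigma_U(i)}^{\tr} A v_{\sigma_U(j)} = (Uv_i)^{\tr} A (Uv_j) = v_i^{\tr}(U^{\tr}AU)v_j = w_{i,j}.
\end{equation*}
That $\phi$ is a group homomorphism is a one-line verification from $UU'v_i = Uv_{\sigma_{U'}(i)} = v_{\sigma_U(\sigma_{U'}(i))}$. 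Injectivity is immediate: if $\sigma_U = \mathrm{id}$ then $U$ fixes the generating set $\calV(A)$ of $\ZZ^n$, hence $U = I$.

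The main content is surjectivity. Given $\sigma \in \Stab(G_A)$, I want to define a $\ZZ$-linear map $U : \ZZ^n \to \ZZ^n$ by $Uv_i \colonequals v_{\sigma(i)}$. The key well-definedness step: if $\sum_i c_i v_i = 0$ for some rational coefficients $c_i$, then $x \colonequals \sum_i c_i v_{\sigma(i)}$ satisfies, for every $j$,
\begin{equation*}
v_j^{\tr} A x = \sum_i c_i w_{j,\sigma(i)} = \sum_i c_i w_{\sigma^{-1}(j),i} = v_{\sigma^{-1}(j)}^{\tr} A \Bigl(\sum_i c_i v_i\Bigr) = 0,
\end{equation*}
where I used $\sigma \in \Stab(G_A)$ in the second equality. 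Since the $v_j$ span $\R^n$ and $A$ is positive definite (hence nondegenerate), $x = 0$. This defines $U$ as a $\QQ$-linear map on $\QQ^n$, and because $\calV(A)$ generates $\ZZ^n$ as a $\ZZ$-module, $U$ restricts to a $\ZZ$-linear endomorphism of $\ZZ^n$. Applying the same construction to $\sigma^{-1}$ produces a two-sided inverse, so $U \in \GL_n(\ZZ)$. Finally, $v_i^{\tr}(U^{\tr}AU)v_j = w_{\sigma(i),\sigma(j)} = w_{i,j} = v_i^{\tr}Av_j$ for all $i,j$, and since $\calV(A)$ spans $\R^n$ this forces $U^{\tr}AU = A$, i.e.\ $U \in \Stab(A)$, with $\sigma_U = \sigma$ by construction.

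The only non-routine point is the well-definedness of the lift in the surjectivity step; everything else is bookkeeping. That step is where the two defining properties of a characteristic vector set interact nontrivially with the positive-definiteness of $A$: the spanning property lets us test vanishing by pairing against all $v_j$, and the $\Stab(G_A)$-condition lets us transfer the linear relation through $\sigma$.
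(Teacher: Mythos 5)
Your proof is correct and follows essentially the same route as the paper: the same map $U \mapsto \sigma_U$, injectivity via the $\ZZ$-spanning property, and surjectivity by lifting a weight-preserving permutation to a $\QQ$-linear map fixing $A$ and then using the spanning property to conclude it lies in $\GL_n(\ZZ)$. You merely spell out explicitly the well-definedness of the lift (via pairing against the spanning set and nondegeneracy of $A$), which the paper leaves implicit in its phrase ``we obtain a unique $\QQ$-stabilizer.''
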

\begin{proof}
	We first define the map $\Stab(A) \to \Stab(G_A)$.  Let $U \in \Stab(A)$.  Then by property (ii) of a characteristic vector set, we have $U \calV(A) = \calV( U^{-\tr} A U^{-1} ) = \calV( A )$; therefore, $U$ permutes the set $\calV(A)$, giving a permutation $\sigma_U \in S_p$ characterized by $\sigma_U(i) = j$ if and only if $Uv_i = v_j$. Accordingly, we have
\begin{equation}
w_{i,j}=v_i^{\tr} A v_j = v_i^{\tr} U^{\tr} A U v_j = v_{\sigma_U(i)} A v_{\sigma_U(j)}
\end{equation}
so moreover $\sigma_U \in \Stab(G_A)$. It is then straightforward to see that this map defines a group homomorphism.
To show this map is an isomorphism, we use property (i) that $\calV(A)$ spans $\Z^n$.  Indeed, the map is injective because if $\sigma_U$ is the identity, then $U v_i = v_i$ for all $i$ so $U$ is the identity.  Similarly, it is surjective: any $\sigma \in \Stab(G_{A})$ fixes pairwise inner products with respect to $A$, so we obtain a unique $\Q$-stabilizer $U \in \GL_n(\Q)$ such that $U^{\tr} A U = A$; however, because $\calV(A)$ spans $\Z^n$, we obtain $U\Z^n=\Z^n$ so $U \in \Stab(A)$.
\end{proof}

\subsection{Graph transformations}
The software {\tt nauty} \cite{nauty} and {\tt bliss} \cite{bliss} allow to test equivalence and find the automorphism group and a canonical vertex ordering of vertex weighted graphs.
Thus, we need graph transformations that allow to translate our vertex and edge weighted complete graphs
into vertex weighted graphs (see also the {\tt nauty} manual \cite{nauty}).

Let $G$ be a complete (undirected) graph on $p$ vertices with vertex weights $w_{i,i}$ and edge weights $w_{i,j}$.
We construct a complete (undirected) graph $T_1(G)$ on $p+2$ vertices which is only edge weighted, as follows.
Let $a \colonequals 1 + \max_{i,j}w_{i,j}$ and $b \colonequals a + 1$ be two distinct weights that do not occur as $w_{i,j}$.
We define the new edge weight $w'_{i,j}$ for $i<j$ to be
\begin{equation}
  w'_{i,j} \colonequals
  \begin{cases}
   w_{i,j}, & \text{ if $i<j \leq p$;} \\
   w_{i,i}, & \text{ if $i \leq p$ and $j=p+1$;} \\
   a, & \text{ if $i \leq p$ and $j=p+2$;} \\
   b, & \text{ if $i=p+1$ and $j=p+2$}
   \end{cases}
\end{equation}
We have a natural bijection $\Isom(G,G') \xrightarrow{\sim} \Isom(T_1(G), T_1(G'))$ of morphisms in the categories of edge-and-vertex-weighted and edge-weighted graphs, hence taking $G'=G$, we have $\Aut(G) \simeq \Aut(T_1(G))$.

The next transformation takes a complete graph $G$ with edge weights $w_{i,j}$ and returns
a vertex weighted graph $T_2(G)$. Let $S$ be the list of possible edge weights, ordered
from the smallest to the largest, and let $w$ be
the smallest integer such that $\#S\leq 2^w$. For an edge weight $s\in S$, denote
$l_k(s)$ the $k$-th value in the binary expansion of the position of $s$ in $S$.
If $G$ has $p$ vertices then $T_2(G)$ will have $p w$ vertices of the form $(i,k)$
with $1\leq i\leq p$ and $0\leq k\leq w-1$.
The weight of the vertex $(i,k)$ is $k$.
Two vertices $(i,k)$ and $(i',k')$ are adjacent in the following cases:
\begin{enumerate}
\item $i=i'$, or
\item $k=k'$ and $l_k(w_{i,i'}) = 1$.
\end{enumerate}
Condition (i) implies that vertices of $G$ correspond to cliques in $T_2(G)$.
Condition (ii) means that each digit $k$ corresponds to a subgraph of $T_2(G)$.
We have again have a natural bijection $\Isom(G,G') \xrightarrow{\sim} \Isom(T_2(G), T_2(G'))$.

Combining this we can lift an isomorphism between $T_2(T_1(G_A))$ and $T_2(T_1(G_B))$ to an isomorphism
between $G_A$ and $G_B$ and thus to an isomorphism between $A$ and $B$ by solving an overdetermined
linear system.
Similarly, we can compute the group $\Aut(A)$ from $\Aut(T_2(T_1(G_A)))$.

\subsection{Canonical orderings of characteristic vector sets}
The canonical vertex ordering functionality of {\tt nauty} and {\tt bliss} gives an ordering
of the vertices of vertex weighted graphs. It is canonical in the sense that two isomorphic
graphs will after this reordering be identical.
We do not know a priori what this ordering is as it depends on the software, its version and
the chosen running options. We still call it \emph{canonical}, following standard terminology.

We need to lift the ordering of the vertex set of $T_2(T_1(G_A))$ into an ordering of the
vertex set of $G_A$ and so the characteristic vector set.
Every vertex $i$ of $G$ corresponds to a set $S_i$ of $w$ vertices in $T_2(G)$ with
$S_i\cap S_j=\emptyset$ for $i\not= j$. For two vertices $i,j$ of $G$ we set $i<j$
if and only if $\min S_i < \min S_j$ in the canonical vertex ordering of $T_2(G)$.
Similarly every vertex $i$ of $G$ maps to one vertex $\phi(i)$ of $T_1(G)$ with $\phi(i)\not=\phi(j)$
if $i\not= j$. Thus we set $i<j$ if and only if $\phi(i) < \phi(j)$ in the canonical ordering.

Combining the above we obtain a canonical ordering of the vertex set of $G_A$ and thus of the
characteristic vector set of the matrix $A$.

\subsection{Canonical form} \label{sec:canfm}


We have a canonical ordering of the characteristic vector set $\calV(A)$, which we write as $v_1, \dots, v_p$.
This ordering is only canonical up to $\Stab(A)$: for another canonical ordering, there is an element $S \in \Stab(A)$ such that $w_i=Sv_i$ for $i=1,\dots,p$, and conversely.  We will now derive a canonical form from the vectors $v_i$.

The Hermite Normal Form (HNF) of a matrix $Q \in \M_{m,n}(\Z)$ is the unique matrix $H=(h_{ij})_{i,j} \in \M_{m,n}(\Z)$
for which there exists $U \in \GL_m(\Z)$ such that $Q = U H$ and moreover:
\begin{enumroman}
\item The first $r$ rows of $H$ are nonzero and the remaining rows are zero.

\item For $1 \leq i \leq r$, if $h_{i,j_i}$ is the first nonzero entry in row $i$, then $j_1 < \ldots < j_r$.

\item $h_{i,j_i} > 0$ for $1\leq i\leq r$.

\item If $1 \leq k < i\leq r$, then $0\leq h_{k,j_i} < h_{i,j_i}$.

\end{enumroman}

In the cases that interest us, the matrix $Q_A$ with columns $v_1, \ldots, v_p$ defined by the characteristic vector set
$\calV(A)$ is of full rank and so the matrix $U$, obtained from the Hermite normal form $Q_A = UH$, is uniquely defined as well. Note that any other ordering $Sv_1, \ldots, Sv_p$ would lead to the matrix $SU$ for some $S \in \Stab(A)$. We denote the matrix $U$ by $U_{\calV(A)}$ and note that its coset representative in $\Stab(A) \backslash \!\GL_n(\ZZ)$ is well-defined (determined by $\calV(A)$).

We now define
\begin{equation} \label{eq:canform!}
\begin{aligned}
\Can_{\GL_n(\ZZ)}(A) &\colonequals U^{\tr}_{\calV(A)} A U_{\calV(A)} \in \calS_{>0}^n. 
\end{aligned}
\end{equation}
Then $\Can_{\GL_n(\Z)}(A)$ depends only on $\calV(A)$ and $A$. Proposition \ref{prop:matcanform} proves the first statement of our main result, Theorem \ref{thm:mainthm} (for any characteristic vector set function $\calV$).

\begin{proposition} \label{prop:matcanform}
The matrix $\Can_{\GL_n(\ZZ)}(A)$ is a canonical form for $A$.
\end{proposition}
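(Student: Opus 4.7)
The plan is to verify the two defining properties (i) and (ii) of a canonical form. Property (i) is essentially immediate: by construction $U_{\calV(A)} \in \GL_n(\Z)$ (the matrix $Q_A$ whose columns are a characteristic vector set has full rank since $\calV(A)$ spans $\Z^n$, and in the HNF factorization $Q_A = U_{\calV(A)} H$ the first factor is automatically unimodular), so $\Can_{\GL_n(\Z)}(A) = U_{\calV(A)}^\tr A U_{\calV(A)}$ is arithmetically equivalent to $A$.

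For property (ii), fix $U \in \GL_n(\Z)$ and set $A' \colonequals U^\tr A U$. I would first track the characteristic vector sets through the equivalence: by property (ii) of Definition \ref{defn:charvectorset}, we have $\calV(A') = U^{-1} \calV(A)$. Next I would argue that the canonical ordering transports along $U^{-1}$: the construction of $G_A$ depends only on inner products with respect to $A$, and $v_i \mapsto U^{-1} v_i$ induces a weighted-graph isomorphism $G_A \xrightarrow{\sim} G_{A'}$ (equivalently, an isomorphism $T_2(T_1(G_A)) \xrightarrow{\sim} T_2(T_1(G_{A'}))$). Because the canonical orderings returned by \texttt{nauty}/\texttt{bliss} are isomorphism-invariant, if $v_1,\dots,v_p$ is a canonical ordering of $\calV(A)$, then $U^{-1} v_1, \dots, U^{-1} v_p$ is a canonical ordering of $\calV(A')$, up to an element of $\Stab(A')$.

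Now I would translate this into HNF data. Writing $Q_A$ and $Q_{A'}$ for the matrices whose columns are the canonically ordered characteristic vector sets, the previous step gives $Q_{A'} = U^{-1} Q_A$ (modulo the stabilizer ambiguity). Setting $Q_A = U_{\calV(A)} H$ in Hermite Normal Form, we obtain
\begin{equation}
Q_{A'} = U^{-1} Q_A = (U^{-1} U_{\calV(A)}) H,
\end{equation}
and by uniqueness of the HNF factorization (for the integer matrix $Q_{A'}$ of full rank) this forces $U_{\calV(A')} = U^{-1} U_{\calV(A)}$ as a coset representative. Substituting,
\begin{equation}
\Can_{\GL_n(\Z)}(A') = (U^{-1} U_{\calV(A)})^\tr (U^\tr A U) (U^{-1} U_{\calV(A)}) = U_{\calV(A)}^\tr A U_{\calV(A)} = \Can_{\GL_n(\Z)}(A).
\end{equation}

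The one subtle point I expect to be the main obstacle is verifying that the two layers of ambiguity --- reordering $v_1,\dots,v_p$ by an element $S \in \Stab(A)$, and reordering $U^{-1}v_1,\dots,U^{-1}v_p$ by an element $S' \in \Stab(A')$ --- each leave $\Can_{\GL_n(\Z)}(A)$ unchanged. For the $\Stab(A)$-ambiguity, replacing $U_{\calV(A)}$ by $S U_{\calV(A)}$ with $S \in \Stab(A)$ replaces $U_{\calV(A)}^\tr A U_{\calV(A)}$ by $U_{\calV(A)}^\tr S^\tr A S U_{\calV(A)} = U_{\calV(A)}^\tr A U_{\calV(A)}$; similarly on the $A'$ side using $\Stab(A') = U^{-1} \Stab(A) U$ via Lemma \ref{lem:samestab}. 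Carefully writing out this invariance closes the argument.
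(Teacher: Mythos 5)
Your proposal is correct and takes essentially the same approach as the paper: the paper compresses your argument into the single coset identity $U_{\calV(P^\tr A P)} \equiv U_{P^{-1}\calV(A)} \equiv P^{-1}U_{\calV(A)}$ in $\Stab(P^\tr A P)\backslash\GL_n(\ZZ)$, relying on the well-definedness of the coset representative established just before the proposition, while you unfold the same chain (transformation of $\calV$, isomorphism-invariance of the canonical graph ordering, HNF uniqueness, and explicit invariance under the $\Stab(A)$ and $\Stab(A')$ ambiguities). The only cosmetic quibble is that $\Stab(U^\tr A U)=U^{-1}\Stab(A)U$ is an elementary computation and does not need Lemma \ref{lem:samestab}.
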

\begin{proof}
Property (i) is clear by definition.  For (ii), given $P\in \GL_n(\ZZ)$, we have
\begin{equation}
U_{\calV(P^\tr A P)} \equiv U_{P^{-1} \calV(A)} \equiv P^{-1} U_{\calV(A)} \in \Stab(P^\tr AP) \backslash\!\GL_n(\ZZ).
\end{equation}
Thus $\Can_{\GL_n(\ZZ)}(P^\tr A P) = \Can_{\GL_n(\ZZ)}(A)$, as desired.
\end{proof}

\begin{remark}
An alternative to computing the canonical form would be to keep the canonicalized version of the graph $G_A$.
However, this graph can be quite large, and the positive definite form allows a more compact representation
even taking into account coefficient explosion that might occur with the Hermite normal form.
\end{remark}

\section{Analysis} \label{SEC_complexity_full}

\subsection{Theoretical time complexity}\label{SEC_complexity}

We now analyze the algorithmic complexity of computing a canonical form using the characteristic vector set in section \ref{sec:voricvec}.

\begin{theorem}
\label{thm:quasipoly}
Given as input a positive definite symmetric matrix $A \in \calS_{>0}^n$ with entries in a computable subfield $F \subset \R$, and a characteristic vector set $\calV(A)$, we can compute a canonical form for $A$ in time $\exp{( O(\log(N)^c)} + s^{O(1)}$ where $N \colonequals \#\calV(A)$, $s$ is the input size of $(A, \calV(A))$, and $c > 1$ is a constant.
\end{theorem}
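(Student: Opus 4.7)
The plan is to execute the pipeline of Section \ref{SEC_Finding_Canonical_Form} with careful attention to the complexity of each step and to invoke Babai's quasipolynomial graph canonical labeling algorithm for the combinatorially hard stage. The input $(A, \calV(A))$ of size $s$ satisfies $N = \#\calV(A) \leq s$, so any quantity polynomial in $N$ is also polynomial in $s$.

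First, we construct the edge- and vertex-weighted complete graph $G_A$ from $\calV(A) = \{v_1, \ldots, v_N\}$ by evaluating the inner products $w_{i,j} = v_i^\tr A v_j$ for the $O(N^2)$ pairs; each evaluation is a bounded (in $n$) number of arithmetic operations in $F$, and in the case $F = \Q$ each weight has bit size polynomial in $s$. We then discretize: sort the multiset of distinct vertex and edge weights, and replace each by its rank in $[0, O(N^2)]$. The resulting complete graph on $N$ vertices carries integer labels and is produced in time polynomial in $s$.

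Next we apply the transformations $T_1$ and $T_2$ of Section \ref{SEC_Finding_Canonical_Form} to obtain a vertex-weighted graph on $V = (N+2) \cdot w$ vertices, where $w = O(\log N)$ is the number of bits needed to index the distinct edge weights; both transformations run in time polynomial in $s$. At this point we invoke Babai's quasipolynomial graph canonical labeling algorithm \cite{Helfgott_Babai}, extended to vertex-weighted graphs by the standard device of supplying the vertex labels as an initial colour partition and refining throughout the algorithm. Its running time is $\exp(O(\log(V)^c)) = \exp(O(\log(N)^c))$ for an absolute constant $c > 1$.

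From the canonical vertex ordering returned by Babai's algorithm we lift back, via the procedure at the end of Section \ref{SEC_Finding_Canonical_Form}, to a canonical ordering $v_{\pi(1)}, \ldots, v_{\pi(N)}$ of $\calV(A)$; we then form the integer matrix $Q_A$ with these columns and compute its Hermite normal form $Q_A = U_{\calV(A)} H$ via Kannan's polynomial-time HNF algorithm \cite{kannan1979polynomial}. Finally we output $\Can_{\GL_n(\Z)}(A) = U_{\calV(A)}^\tr A U_{\calV(A)}$ by standard arithmetic in $F$. Summing the contributions yields the claimed bound $\exp(O(\log(N)^c)) + s^{O(1)}$. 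The main obstacle is the bit complexity bookkeeping: one must ensure that the discretized labels, the entries of $U_{\calV(A)}$ and $H$ returned by Kannan's algorithm, and the final conjugation $U^\tr A U$ all stay of size polynomial in $s$, so that the non-graph portions of the pipeline truly contribute only $s^{O(1)}$; the accompanying but routine task is to verify that Babai's algorithm applies at the stated complexity to the vertex-weighted graph of size $V = O(N \log N)$ produced by $T_2 \circ T_1$.
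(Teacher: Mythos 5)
Your proposal is correct and follows essentially the same route as the paper: construct $G_A$ from the given $\calV(A)$ in time polynomial in $s$, reduce via $T_1$ and $T_2$ to a vertex-weighted graph of size polynomial in $N$ (using that there are only $O(N^2)$ distinct weights), apply Babai's quasipolynomial canonical labeling, and lift the canonical ordering back to finish with a polynomial-time Hermite normal form computation. The only small correction is the reference: since you need a canonical \emph{form} (canonical labeling) of the graph rather than mere isomorphism testing, the appropriate citation is Babai's canonical-form report \cite{BabaiCanonical}, as in the paper, rather than the isomorphism exposition \cite{Helfgott_Babai}.
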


\begin{proof}
Given the characteristic vector set $\calV(A)$ the corresponding graph can be computed in time polynomial in the input size of $A$ and $\calV(A)$ as this part is mostly dominated by the computation of $v^\tr Aw$ for $v,w \in \calV(A)$. Computing a Hermite normal form can be done in time polynomial in the matrix input size which is the same as $\calV(A)$ \cite{kannan1979polynomial}. Because the initial graph has at most $O(N^2)$ distinct weights the final constructed vertex-weighted graph $T_2(T_1(G_A))$ is of polynomial size in $N$. We can conclude if we have a quasi-polynomial algorithm to find a canonical form of a graph. For this we refer to a recent report by Babai \cite{BabaiCanonical}.
\end{proof}
{}
\begin{corollary} \label{cor:finitetime}
For all $n \geq 1$ and $A \in \calS_{>0}^n$ with entries in a computable subfield $F \subset \R$, we can compute a canonical form in at most $2^{O(n^c)}$ arithmetic operations in $F$ for some constant $c>1$. If $F=\Q$, the bit complexity is at most $2^{O(n^c)} + s^{O(1)}2^{O(n)}$ with $s$ the input size of $A$.
\end{corollary}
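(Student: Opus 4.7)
The plan is to combine Lemma \ref{lem:vorrel} with Theorem \ref{thm:quasipoly}, using the Voronoi-relevant characteristic vector set $\calV_{\textup{vor}}(A)$ from section \ref{sec:voricvec}. This is the natural choice for the theoretical complexity analysis, because $\calV_{\textup{vor}}(A)$ has the clean single-exponential upper bound $N \colonequals \#\calV_{\textup{vor}}(A) \leq 2(2^n-1)$, while the alternative $\calV_{\textup{cv}}(A)$ only admits the weaker $n^{O(n)}$ bound from Theorem \ref{prop:vcv}(c) (and is already known to be exponential on examples such as Example \ref{exm:Niemeier}).

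First I would invoke Lemma \ref{lem:vorrel}(c) to compute $\calV_{\textup{vor}}(A)$ in $2^{2n+o(n)}$ arithmetic operations in $F$. Since $2n+o(n) = o(n^c)$ for any $c>1$, this cost is absorbed into $2^{O(n^c)}$. Next I would feed the pair $(A,\calV_{\textup{vor}}(A))$ into Theorem \ref{thm:quasipoly} to produce the canonical form; the resulting bound $\exp(O(\log(N)^c)) + s'^{O(1)}$ becomes $2^{O(n^c)}$ once we use $\log N = O(n)$ from Lemma \ref{lem:vorrel}(b). The $s'^{O(1)}$ term in Theorem \ref{thm:quasipoly} is a bit-operation contribution; in terms of arithmetic operations over $F$, the graph construction and Hermite normal form computation are polynomial in $N$, hence again absorbed into $2^{O(n^c)}$. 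This establishes the first claim.

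For the bit complexity when $F=\Q$, I would track sizes carefully through the pipeline. Lemma \ref{lem:vorrel}(c) gives bit complexity $2^{2n+o(n)} s^{O(1)} = 2^{O(n)} s^{O(1)}$ for computing $\calV_{\textup{vor}}(A)$, and the same analysis (via the Micciancio--Voulgaris CVP algorithm) shows that each output vector in $\calV_{\textup{vor}}(A)$ has bit size polynomial in $s$. Therefore the input size $s'$ of the pair $(A, \calV_{\textup{vor}}(A))$ satisfies $s' \leq 2^{O(n)} s^{O(1)}$. Plugging this into the bit-complexity bound of Theorem \ref{thm:quasipoly} yields
\begin{equation*}
\exp(O(\log(N)^c)) + (s')^{O(1)} \;=\; 2^{O(n^c)} + 2^{O(n)} s^{O(1)},
\end{equation*}
which is the claimed bound.

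There is no real obstacle here beyond bookkeeping: the only subtlety is to confirm that every intermediate object (the Voronoi-relevant vectors, the edge- and vertex-weighted graph $T_2(T_1(G_A))$, and the HNF used in \eqref{eq:canform!}) has bit size that stays at the scale $2^{O(n)} s^{O(1)}$, so that the $s^{O(1)}$ factor is never amplified by more than a $2^{O(n)}$ factor. This follows from the standard bit-size estimates for HNF \cite{kannan1979polynomial} combined with the lattice-algorithm estimates cited in Lemma \ref{lem:vorrel}, and from Babai's quasi-polynomial graph canonicalization cited in Theorem \ref{thm:quasipoly} providing the constant $c>1$.
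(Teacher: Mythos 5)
Your proposal is correct and matches the paper's own proof: the paper likewise combines Lemma \ref{lem:vorrel} (the Voronoi-relevant set of size at most $2(2^n-1)$, computable in $2^{O(n)}$ arithmetic operations, with output bit size $s^{O(1)}2^{O(n)}$ over $\Q$) with Theorem \ref{thm:quasipoly}. Your extra bookkeeping of the intermediate bit sizes just makes explicit what the paper states in one line.
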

\begin{proof}
	By Lemma \ref{lem:vorrel} we have an characteristic vector set function $\calV_{\textup{vor}}$ such that $\calV_{\textup{vor}}(A)$ has cardinality at most $2(2^n-1)$ and can be computed in at most $2^{O(n)}$ arithmetic operations.  For the rational case, the bit complexity (and output size) is at most $s^{O(1)}2^{O(n)}$, with $s$ the input size of $A$. We conclude by  Theorem \ref{thm:quasipoly}.
\end{proof}


\subsection{Practical time complexity}\label{prac_complexity}

We give a short experimental review of the practial time complexity of our implementation \cite{polyhedral_cpp}. We selected a diverse set of test cases to benchmark our implementation: random forms, more than $500\,000$ perfect forms \cite{PerfectDim8} and more than $100$ special forms from the \emph{Catalogue of Lattices} \cite{catalogue}. For the random $n$-dimensional forms a basis matrix $B$ is constructed with entries uniform from $\{-n, \ldots, n\}$, which, if full rank, is turned into a form $A = B^TB$. The set of perfect forms contains all $10\,963$ perfect forms of dimension $2$ up to $8$ and in addition $524\,288$ perfect forms of dimension $9$. The set of special forms consists of a diverse subset from the Catalogue up to dimension $16$, including all laminated lattices. Up to dimension $20$ we used $32$-bit integers and above that (much slower) arbitrary precision integers to prevent overflow. The implementation currently supports the characteristic vector set function $\calV_{\textup{ms}}$ and has not been highly optimized. The main bottleneck seemed to be constructing the characteristic vector sets and the computation of all pairwise inner products (in arbitrary precision) for the graph. Perhaps surprisingly, determining the canonical graph itself took negligible time in most cases. In low dimensions where we can still use basic integer types, computing a canonical form takes a few milliseconds up to a few seconds. For random lattices we can expect relatively small characteristic sets even in large dimensions, therefore enumerating the minimal vectors quickly becomes the bottleneck in high dimensions. For special forms in higher dimensions such as the Leech lattice with $196\,560$ minimal vectors one can expect that the main bottleneck is related to the huge graph. Both storing the graph and computing a canonical representative might barely be in the feasible regime.

\begin{table}[]
\begin{tabular}{l|l|l||l|l|l||l|l|l}
\multicolumn{3}{c||}{} & \multicolumn{3}{c||}{Time (s)} & \multicolumn{3}{c}{$\#\calV_{\textup{ms}}$} \\ \hline
Type & Samples & $n$  & min & avg & max & min          & avg & max \\[1pt] \hline\hline
\multirow{2}{*}{Perfect} & 10\,963 & 2--8 &  0.00041   &  0.0032   &  0.086   &  6  &  73.74   &  240 \\
     &  524\,288  & 9  &  0.0039   &  0.00594   &  0.11  &  90  & 94.04 & 272 \\[1pt]
     \hline
\multirow{4}{*}{Random} & 100 & 10 &  0.0015 &  0.08   &  2.03  &  20  &  100.36  &  988   \\
 & 100 & 20 &   0.016  &  0.17   &  4.18   &  40  &  114.34   &  812   \\
 & 100 & 30 &   2.43  &  23.41   &  511.42  & 60  &  93.46   &  310 \\
 & 100 & 40 &   5.18  &  24.91   &  251.51   &  82    &  107.7   &  240   \\[1pt] \hline
Catalogue & 107 & 2-16 & 0.00018 & 2.12 & 36.71 & 4 & 630.47 & 4320 
\end{tabular}{}
\caption{Timings of our implementation \cite{polyhedral_cpp}.}
\end{table}

\section{Extensions and applications}\label{SEC_Applications}

We conclude with an extension and a description of some applications.

\subsection{Extension to symplectic groups}\label{SEC_Extensions}


Let $J_n \colonequals \begin{pmatrix}
  0    & I_n\\
  -I_n & 0
  \end{pmatrix}$ represent the standard alternating pairing and
\begin{equation}
\Sp_{2n}(\ZZ) \colonequals \left\{Q\in \GL_{2n}(\ZZ) :  Q^\tr J_n Q = J_n\right\}.
\end{equation}
The group $\Sp_{2n}(\ZZ)$ acts on $\calS^{2n}_{>0}$ and we seek a canonical form for this action \cite{ExplicitReductionSP4}.


\begin{theorem}\label{SymplecticBasisAlgorithm}
Given a ordered set of vectors ${\mathcal V} = (v_1, \dots, v_m)$ that generates $\ZZ^{2n}$ as a lattice, there exists an effectively computable symplectic basis $\SympBas({\mathcal V})$ of $\ZZ^{2n}$ such that for every $P\in \Sp_{2n}(\ZZ)$ we have $\SympBas({\mathcal V}P) = \SympBas({\mathcal V}) P$.
\end{theorem}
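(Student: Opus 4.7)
The plan is to construct $\SympBas(\mathcal{V})$ inductively, peeling off one symplectic pair $(e_k,f_k)$ at each step by an algorithm whose operations depend only on the lattice $\ZZ^{2n}$, on the symplectic form $\omega(v,w) \colonequals v J_n w^{\tr}$, and on the ordering of $\mathcal{V}$---hence are automatically $\Sp_{2n}(\ZZ)$-equivariant. The base case $n=0$ is trivial; for the inductive step I would pick a canonical symplectic pair $(e_1,f_1)$, project $\mathcal{V}$ onto its symplectic orthogonal complement, and recurse, phrasing the whole construction for a general symplectic lattice equipped with an ordered generating tuple.

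First I would canonically extract $e_1$: letting $i_1$ be the smallest index with $v_{i_1}\neq 0$, set $e_1 \colonequals v_{i_1}/d$, where $d>0$ is the content of $v_{i_1}$ (a lattice-intrinsic invariant, equal to the positive generator of the ideal of its coordinates). For the symplectic partner, since $e_1$ is primitive, $\mathcal{V}$ generates $\ZZ^{2n}$, and $\omega$ is unimodular, the integer tuple $a_j \colonequals \omega(e_1, v_j)$ has gcd $1$; applying a deterministic Hermite normal form procedure to the row matrix $(a_1,\dots,a_m)$ produces a canonical unimodular $U \in \GL_m(\ZZ)$ whose first column gives Bezout coefficients $c_1,\dots,c_m$ with $\sum_j c_j a_j = 1$, and I set $f_1 \colonequals \sum_j c_j v_j$. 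The plane $\Pi_1 \colonequals \ZZ e_1 \oplus \ZZ f_1$ is then a unimodular symplectic sublattice, giving a decomposition $\ZZ^{2n} = \Pi_1 \oplus \Pi_1^{\perp_\omega}$, and the symplectic projection
\begin{equation*}
\pi(v) \colonequals v - \omega(v,f_1)\,e_1 + \omega(v,e_1)\,f_1
\end{equation*}
sends $\ZZ^{2n}$ surjectively onto $\Pi_1^{\perp_\omega}$. I then recurse on the ordered tuple $\pi(\mathcal{V})$ inside the rank-$(2n-2)$ symplectic sublattice $\Pi_1^{\perp_\omega}$ to obtain $(e_2,f_2,\dots,e_n,f_n)$, and output $\SympBas(\mathcal{V}) \colonequals (e_1,f_1,e_2,f_2,\dots,e_n,f_n)$.

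Equivariance would then follow step by step from the $\Sp$-invariance of each primitive ingredient: under $\mathcal{V} \mapsto \mathcal{V}P$ with $P \in \Sp_{2n}(\ZZ)$, contents are unchanged (so $e_1 \mapsto e_1 P$); the integers $a_j = \omega(e_1,v_j)$ are preserved, so the HNF-derived $c_j$ agree and $f_1 \mapsto f_1 P$; the projection $\pi$ commutes with the right action of $P$, so $\pi(\mathcal{V}) \mapsto \pi(\mathcal{V})P$ inside $\Pi_1^{\perp_\omega}P$, and the inductive hypothesis closes the argument. The hardest part is the choice of $f_1$: any symplectic partner of $e_1$ is only unique modulo $e_1^{\perp_\omega}$, so any equivariance argument must funnel that ambiguity through a single deterministic rule. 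The HNF on the ordered tuple $(a_j)$ performs exactly this pinning, and the one piece of bookkeeping requiring real care is to verify that the HNF output depends solely on the ordered entries $(a_j)$---which, through $\omega$, is itself an $\Sp$-equivariant function of $\mathcal{V}$.
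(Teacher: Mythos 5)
Your proposal is correct and follows essentially the same route as the paper: normalize the first nonzero vector to a primitive $e_1$, use the fact that the symplectic products with the generating family have gcd $1$ to pick a deterministic partner $f_1$, project the family onto the symplectic complement of the pair, and recurse, with equivariance coming from the coordinate-free ($\Sp_{2n}(\ZZ)$-invariant) nature of each step. Your extra care about making the Bezout coefficients deterministic (via a fixed HNF routine applied to the invariant tuple $\omega(e_1,v_j)$) is exactly the detail the paper compresses into ``we can find in a deterministic manner integers $\alpha_i$,'' so nothing essential differs.
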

\begin{proof}
Let $w_1$ be the first non-zero vector in ${\mathcal V}$ divided by the gcd of its coefficients.  
Since the family of vectors spans $\ZZ^n$, the gcd of the symplectic products $\omega(w_1, v_j)$ is $1$.
Thus we can find in a deterministic manner integers $\alpha_i$ such that $w_{2n} = \sum_{i=1}^m \alpha_i v_i$
satisfies $\omega(w_1, w_{2n}) = 1$.
We can then replace the vectors $v_i$ of the vector family by $v'_i = v_i - \omega(v_i, w_{2n}) w_1 + \omega(v_i, w_1) w_{2n}$. They satisfy $\omega(v'_i, w_1) = \omega(v'_i, w_{2n})=0$.
Thus we apply the same construction inductively on them and get our basis. The invariance property follows from the fact that we never use specific coordinate systems.
\end{proof}

A canonical representative for a form $A \in \calS^{2n}_{>0}$ under the action of $\Sp_{2n}(\Z)$ can also be computed using our canonical form, as follows:
\begin{enumalg}
\item Compute a characteristic vector family using e.g. $\calV_{\textup{cv}}$.
\item Compute a graph on this characteristic set of vector by assigning to two vectors $v$, $v'$ the weight $(vAv', vJ_n v')$.
\item Apply the canonicalization procedure and get a canonical ordering of $\calV_{\textup{cv}}$.
\item Use Theorem \ref{SymplecticBasisAlgorithm} in order to get a symplectic basis which then gives a reduction matrix.
\end{enumalg}

\subsection{Lattice databases}

Several efforts have sought to enumerate lattice genera of either bounded discriminant or satisfying some arithmetic conditions such as small (spinor) class number.  For example, the Brandt--Intrau tables \cite{G63} of reduced ternary forms with discriminant up to 1000, Nipp's tables \cite{N91} of positive definite primitive quaternary quadratic forms with discriminant up to 1732, and more recently the complete table of lattices with class number one due to Kirschmer--Lorch \cite{KL16}, to name a few.  A current project of interest in number theory is an extension of the L-functions and Modular Forms DataBase (LMFDB) \cite{lmfdb} to include lattices.  

The general strategy for generating these tables can take several forms.  For example, a list of isometry class candidates can be generated by extending lattices of lower rank in some systematic way \cite{G63,N91}.
Classes can also be generated by Kneser's method of neighboring lattices \cite{SP91} (see section \ref{sec:algmodfrm} below).  Although the completeness of the list of genus representatives can be verified using the Minkowski--Siegel mass formula, one critical bottleneck in most of these schemes is eliminating redundancy in the lists generated, especially for lattices with high rank and class number---it is here where we profit significantly from a canonical form.

Another current shortcoming of the database has been the lack of a deterministic naming scheme for lattices.  Although lattices up to equivalence can be classified by dimension, determinant, level, and class number,  beyond that point many genera of such lattices can exist, and each genus can potentially contain multiple classes.  Finding a canonical form for lattices provides a way to establish a deterministic labeling.  This has long been known to be a challenge: for example, it is exactly the problem of the boundary of a fundamental domain in Minkowski reduction (mentioned in the introduction) that is at issue.  
Ad hoc enumeration and labeling suffers from the deficiency that a computer failure or other issues in the database could result in new and different enumeration.  A canonical form provides a mechanism for a canonical label for lattices.  
Such a scheme would still depend on the graph canonical form being called in the algorithm; but in the event of a switch 
a bijective dictionary could easily be stored between the new naming and the old, giving still a nearly permanent 
deterministic naming of lattices. 

\subsection{Application to enumeration of perfect forms}
A canonical form really shows its strength compared to pairwise equivalence checks when the number of forms to be classified becomes very large. This is certainly the case during the enumeration of perfect forms using Voronoi's algorithm in dimension $9$ or higher. In dimension $9$ already more than $20$ million (inequivalent) perfect forms are found and the total number could be on the order of half a billion \cite{vanwoerdenperfect}. Even though there are some useful invariants such as the number of miminal vectors, the determinant and the size of the automorphism group, the number of remaining candidates for equivalence for each found perfect form can become quite large.  Removing equivalent forms is a large part of the computational cost during the enumeration. 

Therefore, efficiently finding a canonical form seems to be a necessity in completing the full enumeration in dimensions $9$ or higher. Luckily by the definition of a perfect form we always have that $\Min(A)$ is full dimensional. Furthermore for all perfect forms found so far $\Min(A)$ also spans $\ZZ^n$ and therefore the function $\calV_{\textup{ms}}$ seems to be an efficient way to obtain a small characteristic vector set. In Section \ref{prac_complexity} we saw that computing a canonical perfect form in dimension $9$ takes just a few milliseconds.


\subsection{Application to algebraic modular forms} \label{sec:algmodfrm}

Finally, we present an application to speed up computations of orthogonal modular forms, a special case of the theory of \emph{algebraic modular forms} as defined by Gross \cite{gross1999algebraic}.  We shift our perspective slightly, 
varying lattices in a (fixed) quadratic space.

Let $L \subset V$ be a \defi{(full) lattice}, the $\Z$-span of a $\Q$-basis for $V$.  
We say $L$ is \defi{integral} if $x^\tr  Ay \in \Z$ for all $x,y \in L$, and suppose that $L$ is integral.  We represent $L$ in bits by a basis $\{v_1,\dots,v_n\}$; letting $U_L$ be the change of basis matrix, we obtain a  form
\begin{equation} 
A_L \colonequals (v_i^\tr  A v_j)_{1 \leq i,j \leq n} = U_L^\tr  A U_L.
\end{equation}
(It is not necessarily the case that $A_L$ is arithmetically equivalent to $A$---the change of basis need only belong to $\GL_n(\Q)$.)  

In order to organize these lattices, we define the \defi{orthogonal group}
\begin{equation}
\Orth(V) \colonequals \{P \in \GL_n(\Q) : P^\tr  A P = A\}.
\end{equation}
Integral lattices $L,L' \subset V$ are \defi{isometric}, written $L \simeq L'$, if there exists $P \in \Orth(V)$ such that $P(L)=L'$.  Choosing bases for $L,L'$, we see that $L \simeq L'$ if and only if $A_L$ and $A_{L'}$ are arithmetically equivalent.

We repeat these definitions replacing $\Q$ (and $\Z$) by $\Q_p$ (and $\Z_p$) for a prime $p$, abbreviating $L_p \colonequals L \otimes_\Z \Z_p$.  Then the \defi{genus} of $L$ is
\begin{equation}
\Gen(L) \colonequals \{ L' \subset V : L_p \simeq L'_p \text{ for all primes $p$}\}.
\end{equation}
Finally, we define the \defi{class set} $\Cls(L)$ as the set of isometry classes in $\Gen(L)$.  By the geometry of numbers, we have $\#\Cls(L)<\infty$.

The theory of $p$-neighbors, due originally to Kneser \cite{kneser1957klassenzahlen}, gives an effective method to compute representatives of the class set $\Cls(L)$, as follows.  Let $p$ be prime (allowing $p=2$) not dividing $\det(A_L)$.  We say that a lattice $L' < V$ is a \defi{$p$-neighbor} of $L$,  and write $L' \sim_p L$, if $L'$ is integral  and
\begin{equation}
[L:L \cap L']=[L':L \cap L']=p
\end{equation}
(index as abelian groups).  If $L \sim_p L'$, then $\disc(L)=\disc(L')$ and $L' \in \Gen(L)$ \cite[Lemma 5.7]{GreenbergVoight}.  
The set of $p$-neighbors can be computed in time $O(p^{m+\epsilon} H_n(s))$, where $s$ is the input size and $H_n$ is a polynomial depending on $n$. 
Moreover, by strong approximation \cite[Theorem 5.8]{GreenbergVoight}, there is an effectively computable finite set $S$ of primes such that every $[L'] \in \Cls(L)$ is an \defi{iterated $S$-neighbor}
$L \sim_{p_1} \cdots \sim_{p_r} L_{r} \simeq L'$ with $p_i \in S$.  Typically, 
we may take $S=\{p\}$ for any $p \nmid \disc(L)$.  In this way, we may compute a set of representatives for $\Cls(L)$ from iterated $S$-neighbors.

The space of \defi{orthogonal modular forms for $L$} (with trivial weight) is
\begin{equation} 
M(\Orth(L))  \colonequals \Map(\Cls(L),\C). 
\end{equation}
In the basis of characteristic functions $\delta_{[L']}$ for $[L'] \in \Cls(L)$ we have $M(\Orth(L)) \simeq \C^h$ where $h \colonequals \#\Cls(L)$.  For $p \nmid \disc(L)$, define the \defi{Hecke operator}
\begin{equation}
\begin{aligned}
T_p \colon M(\Orth(L)) &\to M(\Orth(L)) \\
T_p(f)([L']) &= \sum_{M' \sim_p\, L'} f([M']).
\end{aligned}
\end{equation}
The operators $T_p$ commute and are self-adjoint (with respect to a natural inner product);
accordingly, there exists a basis of simultaneous eigenvectors for the Hecke operators, called \defi{eigenforms}.  

In this way, to compute the matrix representing the Hecke operator $T_p$, for each $[L'] \in \Cls(L)$, we need to identify the isometry classes of the $p$-neighbors of $L'$.  Here is where our canonical form algorithm applies, returning to our original motivation: after computing canonical forms for $\Cls(L)$, for each $p$-neighbor, we compute their canonical forms and then a hash table look up on $\Cls(L)$. This reduces our computation from $O(h^2)$ \emph{isometry tests} to $O(h)$ \emph{hash table lookups}.  For medium-sized values of $n$, we hope that the use of canonical forms will allow us to peer more deeply into the world of automorphic forms on orthogonal groups.






\end{document}